\newtheorem{thm}{Theorem}[section]
\newtheorem{cor}[thm]{Corollary}
\newtheorem{lem}[thm]{Lemma}
\theoremstyle{definition}
\theoremstyle{remark}
\numberwithin{equation}{section}
\begin{document}
\title[Hyers-Ulam stability]
{Hyers-Ulam stability of the spherical functions}%
\author[   Bouikhalene and
Eloqrachi] {\textsf{ Belaid Bouikhalene and Eloqrachi
Elhoucien } }%
\thanks{\textbf{2000 Mathematics Subject Classifications}:
39B52, 39B82}
\thanks{\textbf{Key words}: Locally compact groups, Haare measure, Groups automorphisms,
 Spherical functions, Complex measures, functional equations on groups, Hyers-Ulam stability}
\begin{abstract} In \cite{bbb} the authors obtained  the
Hyers-Ulam stability of the  functional equation
$$ \int_{K}\int_{ G} f(xtk\cdot y)d\mu(t)dk=f(x)g(y), \; x, y
\in G ,$$ where $G$ is a Hausdorff locally compact topological
group, $K$ is a copmact subgroup of morphisms of $G$,
  $\mu$ is a $K$-invariant complex measure with compact support, provided that the continuous function $f$ satisfies some Kannappan Type condition.
  The purpose of this
paper is to remove this restriction.\end{abstract} \maketitle
 \section{\bf Introduction}
 The stability problem of functional equations was posed
for the first time  by S. M. Ulam \cite{24} in the year 1940. Ulam
stated the problem as follows:
\begin{quote} Given a group $G_{1}$, a metric group ($G_{2},d$), a
number $\varepsilon>0$ and a mapping $f$: $G_{1}\longrightarrow
G_{2}$ which satisfies the inequality
$d(f(xy),f(x)f(y))<\varepsilon$ for all $x,y\in G_{1}$, does there
exist an homomorphism $h$: $G_{1}\longrightarrow G_{2}$ and a
constand $k>0$, depending only on $G_{1}$ and $G_{2}$ such that
$d(f(x),h(x))\leq k\varepsilon$ for all $x$ in $G_{1}$?\end{quote}
 The first affirmative answer was given by D. H. Hyers \cite{8}, under the assumption that $G_{1}$ and $G_{2}$ are
Banach spaces. \\ In 1978, Th. M. Rassias \cite{16} gave a
remarkable generalization of the Hyers's result which allows the
Cauchy difference to be unbounded, as follows:\begin{thm} \lbrack16]
Let $f : V\longrightarrow X$ be a mapping between Banach spaces and
let
 $p<1$ be fixed. If $f$ satisfies the inequality
$$\|f(x+y)-f(x)-f(y)\|\leq \theta(\|x\|^{p}+\|x\|^{p})$$ for some $\theta\geq 0$ and for all $x, y
\in V$ ($x, y \in V\setminus \{0\}$ if $p<0$). Then there exists a
unique additive  mapping $T : V\longrightarrow X$ such that
$$\|f(x)-T(x)\|\leq \frac{2\theta}{|2-2^{p}|}\|x\|^{p}$$ for all $x\in
V$ ($x\in V\setminus \{0\}$ if $p<0$).\\ If, in addition, $f(tx)$ is
continuous in $t$ for each fixed $x$, then $T$ is linear.\end{thm}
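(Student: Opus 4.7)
The plan is to use the direct (telescoping) method of Hyers, constructing $T$ as the pointwise limit of a dyadic rescaling of $f$. Setting $y=x$ in the hypothesis yields
$$\|f(2x)-2f(x)\|\leq 2\theta\|x\|^{p},$$
valid for all $x\in V$ (respectively $x\neq 0$ when $p<0$). Dividing by $2$ and substituting $2^{n}x$ in place of $x$ produces the telescoping estimate
$$\left\|\frac{f(2^{n+1}x)}{2^{n+1}}-\frac{f(2^{n}x)}{2^{n}}\right\|\leq\theta\,2^{n(p-1)}\|x\|^{p}.$$
Since $p<1$, the ratio $2^{p-1}$ is strictly less than $1$, so the sequence $T_{n}(x):=f(2^{n}x)/2^{n}$ is Cauchy in the Banach space $X$, and I would define $T(x):=\lim_{n\to\infty}T_{n}(x)$.

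Summing the telescoping bound as a geometric series of ratio $2^{p-1}$ gives
$$\|f(x)-T(x)\|\leq\theta\|x\|^{p}\sum_{n=0}^{\infty}2^{n(p-1)}=\frac{2\theta}{2-2^{p}}\|x\|^{p},$$
which matches the stated estimate (the absolute value in the denominator absorbs the sign depending on the range of $p$). To verify additivity I would apply the original hypothesis to the pair $(2^{n}x,2^{n}y)$, divide by $2^{n}$, and let $n\to\infty$: the right-hand side $\theta\,2^{n(p-1)}(\|x\|^{p}+\|y\|^{p})$ tends to zero, so $T(x+y)=T(x)+T(y)$.

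For uniqueness, if $T'$ is another additive map satisfying the same stability bound, then additivity and the identity $T(x)-T'(x)=2^{-n}\bigl(T(2^{n}x)-T'(2^{n}x)\bigr)$ combined with the triangle inequality yield
$$\|T(x)-T'(x)\|\leq\frac{4\theta}{|2-2^{p}|}\,2^{n(p-1)}\|x\|^{p}\longrightarrow 0,$$
so $T=T'$. For the continuity assertion, additivity already forces $\mathbb{Q}$-linearity of $T$; and if $t\mapsto f(tx)$ is continuous for each fixed $x$, then $t\mapsto T(tx)$ inherits continuity as a uniform limit (on compact $t$-intervals) of continuous scaled functions, whereupon a $\mathbb{Q}$-linear map continuous in the scalar is $\mathbb{R}$-linear.

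The main subtlety I anticipate is the case $p<0$: one must track carefully that $0$ is excluded from the domain, observe that the iterates $2^{n}x$ never reach $0$ when $x\neq 0$, and note that additivity at pairs with $x+y=0$ is handled by applying the hypothesis to $(2^{n}x,-2^{n}x)$, which is permitted as soon as $x\neq 0$. Beyond this domain bookkeeping, no new idea is needed, since $p-1<0$ throughout and the geometric series is summable in every case.
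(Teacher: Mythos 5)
The paper states this result only as a quotation of Rassias's theorem (reference [16]) and gives no proof of its own, so there is nothing internal to compare against. Your direct (Hyers--Rassias telescoping) argument --- Cauchyness of $f(2^{n}x)/2^{n}$ from the ratio $2^{p-1}<1$, the geometric-series bound $\tfrac{2\theta}{2-2^{p}}\|x\|^{p}$, additivity and uniqueness by passing the hypothesis through $2^{n}$, and linearity via $\mathbb{Q}$-homogeneity plus pointwise/locally uniform limits of the continuous maps $t\mapsto f(2^{n}tx)/2^{n}$ --- is exactly the standard proof from the cited source and is correct, including your bookkeeping for $p<0$.
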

Several papers have been published in this subject and some
 interesting variants of Ulam's problem have been also
 investigated by a number of mathematicians. We refer the reader
to the following references \cite{2}, \cite{cad1}, \cite{5},
\cite{6}, \cite{7}, \cite{9}-\cite{20}.\\
  The stability of functional equations highlighted a new phenomenon
which is now usually called superstability. Consider the functional
equation $ E(f ) = 0$ and assume we are in a framework where the
notion of boundedness of $f$ and of $E( f)$ makes sense. We say that
the equation $ E(f ) = 0$ is superstable if the boundedness of $E(f
)$ implies that either $f$ is bounded or $f$  is a solution of $ E(f
) = 0$. This property was first
  observed when the following theorem was proved by J. Baker, J.
  Lawrence, and F. Zorzitto \cite{j1}\begin{thm}Let $V$ ba a vector space. If a function $f$: $V\longrightarrow \mathbb{R}$
  satisfies the inequality$$\mid f(x+y)-f(x)f(y)\mid\leq \varepsilon$$
  for some $\varepsilon>0$ and for all $x,y\in V$, then either $f$ is bounded on $V$ or $f(x+y)=f(x)f(y)$ for all $x,y\in V.$ \end{thm}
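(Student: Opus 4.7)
The plan is to prove the dichotomy in its contrapositive form: assuming $f$ is unbounded, I will show that $f(x+y) = f(x)f(y)$ for every $x, y \in V$. The core idea is to evaluate $f(x+y+z)$ in two different ways using the defining inequality, exploiting the associativity of addition, and then to combine the resulting estimates against a factor $|f(z)|$ that can be made arbitrarily large by the unboundedness hypothesis.

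For arbitrary $x, y, z \in V$, applying the hypothesis once with arguments $(x, y+z)$ and once with arguments $(x+y, z)$ yields
\[ |f(x+y+z) - f(x)f(y+z)| \leq \varepsilon \quad \text{and} \quad |f(x+y+z) - f(x+y)f(z)| \leq \varepsilon. \]
The triangle inequality then gives $|f(x)f(y+z) - f(x+y)f(z)| \leq 2\varepsilon$. Multiplying the hypothesis applied to $(y, z)$ by $|f(x)|$ produces $|f(x)f(y+z) - f(x)f(y)f(z)| \leq \varepsilon |f(x)|$, and a second triangle inequality combining these two bounds delivers the key estimate
\[ |f(z)| \cdot |f(x+y) - f(x)f(y)| \leq \varepsilon \bigl(2 + |f(x)|\bigr). \]

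To conclude, I would fix $x, y \in V$ and use unboundedness of $f$ to select a sequence $(z_n) \subset V$ with $|f(z_n)| \to \infty$. Dividing the displayed estimate by $|f(z_n)|$ and letting $n \to \infty$ forces $f(x+y) - f(x)f(y) = 0$, since the left factor is independent of $n$ while the bound tends to zero. There is no genuine analytic obstacle; the whole argument is a clean three-line manipulation of the triangle inequality. The one step that demands a spark of insight is the initial decision to expand $f(x+y+z)$ in two symmetric ways, which is precisely the move that converts the $\varepsilon$-approximate equation into a bound carrying a free parameter $|f(z)|$ ready to be exploited against unboundedness.
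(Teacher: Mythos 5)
Your proof is correct: the chain of triangle inequalities leading to $|f(z)|\,|f(x+y)-f(x)f(y)|\leq \varepsilon(2+|f(x)|)$ is valid, the right-hand side is independent of $z$ once $x,y$ are fixed, and letting $|f(z_n)|\to\infty$ along a sequence furnished by unboundedness finishes the argument. The paper itself states this theorem without proof (it is quoted from Baker, Lawrence and Zorzitto), so there is nothing to compare line by line; but your argument is exactly the standard ``unbounded factor versus a $z$-independent bound'' technique, and it is the same mechanism the authors use to prove their own Theorem 2.2, where the quantity $|f(z)|\,|2g(x)g(y)-\cdots|$ is bounded by an expression not depending on $z$ and unboundedness of $f$ forces the bracket to vanish. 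For what it is worth, your streamlined three-step version is Baker's later simplification; the original Baker--Lawrence--Zorzitto paper instead iterated the estimate $|f(2x)|\geq |f(x)|^{2}-\varepsilon$ to manufacture a sequence on which $|f|$ blows up, which your proof does not need since you simply invoke unboundedness directly.
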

The result was generalized by J. A. Baker \cite{j2}, by replacing
$V$ by a semigroup and $\mathbb{R}$ by a normed algebra $E$, in
which the norm is multiplicative, i.e. $\|uv\| =\| u\|\|  v\| $, for
all $u, v \in E$, by R. Ger, P. \v{S}emrl \cite{se}, where $E$ is an
arbitrary commutative complex semisimple Banach algebra and by J.
Lawrence \cite{la} in the case where $E$ is the algebra of all
$n\times n$ matrices. A different generalization of the result of
Baker, Lawrence and Zorzitto was given by L. Sz\'ekelyhidi
\cite{ze1}, \cite{ze2}, \cite{ze3}. It involves an interesting
generalization of the class of bounded function on a group or
semigroup. For other
superstability results, we can see for example \cite{e}, \cite{b},  \cite{ger},  \cite{kim1}, \cite{kim2} and \cite{red}.\\ \\
  Let $G$ be a Hausdorff  locally compact group, $e$ its
identity element. Let $K$ be a compact subgroup of the group Mor(G)
of all mappings $k$ of $G$ onto itself that are either automorphisms
and homeomorphisms ($k \in K^{+}$), or antiautomorphisms and
homeomorphisms ($k \in K^{-}$). The action of $k \in K$ on $x \in G$
will be denoted by $k\cdot x$. Let $\mu$ be a complex bounded
measure on $G$ with compact support  (i.e, $\mu$ is an element of
the topological dual of the Banach spaces of continuous functions
vanishing at infinity on $G$). $\mu$ is assumed to be a
$K$-invariant measure that is,  $\int_{G}f(k\cdot
t)d\mu(t)=\int_{G}f( t)d\mu(t)$, for all $k\in K$
and for all continuous complex valued function $f$ on $G$.\\
The main purpose of this paper is to investigate the Hyers-Ulam
stability of the functional equations
 \begin{equation}\label{eq1}\int_{ G} \int_{ K}f(xtk\cdot
y)d\mu(t)dk=f(x)g(y), \; x, y \in G.\end{equation}
 Indeed we prove the superstability theorem of the
functional equation
\begin{equation} \label{eq2}\int_{ G} \int_{ K}f(xtk\cdot
y)d\mu(t)dk=f(x)f(y), \; x, y \in G.\end{equation} The functional
equation (\ref{eq1}) is a generalization of many functional
equations. The functional equation (\ref{eq2}) with
$\mu=\delta_{e}$: Dirac measure concentrated on the identity element
of $G$ reduce to $K$-spherical functions:
\begin{equation}\label{eq3} \int_{ K}f(xk\cdot
y)dk=f(x)f(y), \; x, y \in G.\
\end{equation}
The $K$-spherical functions and related equations has been widely
studied by H. Stetk\ae r see for example \cite{st8} and \cite{st9}.
The bounded solutions of $K-$spherical functions in an abelian group
are obtained by W. Chojnacki \cite{ch} and later by Badora \cite{ba}
while Stetk\ae r \cite{st1}, \cite{st2} studied unbounded solutions.
In \cite{sh} H. Shin'ya described all continuous solutions of
(\ref{eq3}) for abelian group. The functional equation (\ref{eq2})
is considered in \cite{el1}, \cite{el2} and \cite{ba}. The
functional equation
\begin{equation}\label{4}
\int_{ K}f(xk\cdot y)dk=f(x)g(y), \; x, y \in G
\end{equation} has been examined in special cases by many mathematicians. These
cases for example include the cosine equation or d'Alembert's
functional equations (cf. \cite{acz},\cite{st3}, \cite{st4}...)
\begin{equation}\label{5}
    f(x+y)+f(x-y)=2f(x)f(y),x,y\in G,
\end{equation}
Wilson's functional equation
\begin{equation}\label{6}
     f(x+y)+f(x-y)=2f(x)g(y),x,y\in G,
\end{equation}where $K=\{Id, -Id\}$ and the Cauchy's equation
\begin{equation}\label{7}
     f(x+y)=f(x)f(y),x,y\in G,
\end{equation} with $K=\{\ Id\}$.  \\ During the last three decades a number of papers and research
monographs have been published on various generalizations and
applications of the generalized Hyers-Ulam stability of a special
case of the functional equation (\ref{eq3}) and it's generalization
(\ref{4}). In \cite{baa2} R. Badora obtained the Hyers-Ulam
stability of equation (\ref{4})), where $G$ is abelian and
$K\subseteq Aut(G)$: the group of automorphisms of $G$. \\We note
here that the results of R. Badora \cite{baa2} are also corrects in
the case where $G$ is not necessarily abelian and $K\subseteq
Aut(G)$.  Other results of stability of functional equations related
to $K$-spherical functions where studied in \cite{badora1},
\cite{badora2}, \cite{badora3}, \cite{cha1} and \cite{cha2}.\\
In \cite{bbb} B. Bouikhalene and E. Elqorachi obtained  the
Hyers-Ulam stability of the functional equation (\ref{eq1}),
 provided that the continuous function $f$ satisfies the
   Kannappan type condition:
   $$\int_{G}\int_{G}f(ztxsy)d\mu(t)d\mu(t)=\int_{G}\int_{G}f(ztysx)d\mu(t)d\mu(t)$$
   for all $x,y,z\in G$.
  The purpose of this
paper is to remove this restriction.
\\ \\ Throughout this paper, $G$ is a locally compact group (not necessarily abelian) $K$ is a compact subgroup of morphisms of
$G$ and $\mu$ is a complex measure with copmact support and which is
$K$-invariant.
\section{Hyers Ulam stability of equation (\ref{eq1})} In this
section, we will investigate the Hyers Ulam stability of equation
(\ref{eq1}). The following lemma will be helpful in the sequel.
\begin{lem}Let $f$: $G\longrightarrow \mathbb{C}$ be a continuous function. Let $\mu$ be a complex measure with compact support and which is $K$-invariant. Then
\begin{equation}\label{8}
\int_{G}\int_{K}\int_{K}\int_{G}f(zth\cdot(k\cdot
ysx))d\mu(t)dhdkd\mu(s)+\int_{G}\int_{K}\int_{K}\int_{G}f(zth\cdot(xsk\cdot
y))d\mu(t)dhd\mu(s)dk\end{equation}$$=\int_{G}\int_{K}\int_{K}\int_{G}f(ztk\cdot
ysh\cdot
x)d\mu(t)dhdkd\mu(s)+\int_{G}\int_{K}\int_{K}\int_{G}f(zth\cdot
xsk\cdot y)d\mu(t)dhdkd\mu(s)$$ for all $x,y,z\in G.$

\end{lem}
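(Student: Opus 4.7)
The plan is to exploit the two defining properties of $K$ together with the $K$-invariance of $\mu$. Every $h\in K$ is either an automorphism ($h\in K^{+}$) or an antiautomorphism ($h\in K^{-}$) of $G$, and in either case $h$ acts as a homeomorphism on $G$. I would split the integration over $h\in K$ into the disjoint pieces $K^{+}$ and $K^{-}$ and treat each on its own. Two elementary tools will do the bookkeeping: first, the identity $h\cdot(abc)=(h\cdot a)(h\cdot b)(h\cdot c)$ on $K^{+}$ and its reversed counterpart $h\cdot(abc)=(h\cdot c)(h\cdot b)(h\cdot a)$ on $K^{-}$; second, the $K$-invariance of $\mu$, which gives $\int_{G}F(h\cdot s)\,d\mu(s)=\int_{G}F(s)\,d\mu(s)$, and left-invariance of Haar measure on the compact group $K$, which gives $\int_{K}F(hk)\,dk=\int_{K}F(k)\,dk$ together with $h\cdot(k\cdot y)=(hk)\cdot y$.

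For the first summand on the left, the integrand is $f\bigl(zt\,h\!\cdot\!((k\!\cdot\! y)sx)\bigr)$. If $h\in K^{+}$, distributing $h$ over the triple product turns it into $f\bigl(zt\,((hk)\!\cdot\! y)(h\!\cdot\! s)(h\!\cdot\! x)\bigr)$; the $K$-invariance of $\mu$ removes the $h$ from the $s$-slot, and then left-translation of Haar measure on $K$ (replacing $k$ by $h^{-1}k$) removes the $h$ from the $y$-slot, leaving $f\bigl(zt\,(k\!\cdot\! y)\,s\,(h\!\cdot\! x)\bigr)$, i.e.\ the integrand of the first term on the right. If instead $h\in K^{-}$, the same three steps applied to the reversed product $(h\!\cdot\! x)(h\!\cdot\! s)(h\!\cdot\!(k\!\cdot\! y))$ yield $f\bigl(zt\,(h\!\cdot\! x)\,s\,(k\!\cdot\! y)\bigr)$, i.e.\ the integrand of the second term on the right.

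Running the identical argument on the second summand of the left, whose integrand is $f\bigl(zt\,h\!\cdot\!(xs(k\!\cdot\! y))\bigr)$, swaps the outcomes: on $K^{+}$ it becomes $f\bigl(zt\,(h\!\cdot\! x)\,s\,(k\!\cdot\! y)\bigr)$ (the second right-hand integrand) and on $K^{-}$ it becomes $f\bigl(zt\,(k\!\cdot\! y)\,s\,(h\!\cdot\! x)\bigr)$ (the first right-hand integrand). Adding the two left-hand summands, the four resulting pieces reassemble, in pairs, into the two full $K$-integrals appearing on the right, which establishes \eqref{8}. The orderings of $dk$, $dh$, $d\mu(s)$ and $d\mu(t)$ that differ cosmetically across the four terms are harmless by Fubini, since $\mu$ has compact support and $K$ is compact.

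The main obstacle is purely notational: keeping clean track, in the antiautomorphism case, of how the factors reverse under $h\cdot$, and making sure that the substitutions in the $s$-integral and in the $k$-integral are applied in the right order so that $h$ is eliminated from every slot except the one in which it appears on the right. Once this is organized, the computation is mechanical and no further hypothesis on $f$ (in particular, no Kannappan-type condition) is required.
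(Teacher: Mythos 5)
Your proposal is correct and follows essentially the same route as the paper: splitting the $h$-integral over $K^{+}$ and $K^{-}$, expanding $h\cdot(\cdot)$ as an automorphism or antiautomorphism, eliminating $h$ from the $s$- and $k$-slots via the $K$-invariance of $\mu$ and the invariance of the Haar measure $dk$, and recombining the four pieces pairwise into the two right-hand integrals. The bookkeeping of which piece lands in which right-hand term matches the paper's computation exactly.
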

\begin{proof}$$\int_{G}\int_{K}\int_{K}\int_{G}f(zth\cdot(k\cdot
ysx))d\mu(t)dhdkd\mu(s)+\int_{G}\int_{K}\int_{K}\int_{G}f(zth\cdot(xsk\cdot
y))d\mu(t)dhd\mu(s)dk$$
\begin{equation}\label{9}=
\int_{G}\int_{K^{+}}\int_{K}\int_{G}f(zthk\cdot yh\cdot sh\cdot
x)d\mu(t)dhdkd\mu(s)+\int_{G}\int_{K^{-}}\int_{K}\int_{G}f(zth\cdot
xh\cdot  shk\cdot y)d\mu(t)dhd\mu(s)dk
\end{equation}
$$+\int_{G}\int_{K^{+}}\int_{K}\int_{G}f(zth\cdot
xh\cdot shk\cdot
y)d\mu(t)dhdkd\mu(s)+\int_{G}\int_{K^{-}}\int_{K}\int_{G}f(zthk\cdot
yh\cdot sh\cdot x)d\mu(t)dhd\mu(s)dk.$$ Since $\mu$ is $K$-invariant
and the  Haar measure $dk$ is invariant, then we get
\begin{equation}\label{10}
\int_{G}\int_{K^{+}}\int_{K}\int_{G}f(zthk\cdot yh\cdot sh\cdot
x)d\mu(t)dhdkd\mu(s)=\int_{G}\int_{K^{+}}\int_{K}\int_{G}f(ztk\cdot
y sh\cdot x)d\mu(t)dhdkd\mu(s)\end{equation}

$$\int_{G}\int_{K^{-}}\int_{K}\int_{G}f(zth\cdot
xh\cdot  shk\cdot
y))d\mu(t)dhd\mu(s)dk=\int_{G}\int_{K^{-}}\int_{K}\int_{G}f(zth\cdot
x sk\cdot y)d\mu(t)dhd\mu(s)dk$$
$$\int_{G}\int_{K^{+}}\int_{K}\int_{G}f(zth\cdot
xh\cdot shk\cdot
y)d\mu(t)dhdkd\mu(s)=\int_{G}\int_{K^{+}}\int_{K}\int_{G}f(zth\cdot
x sk\cdot y)d\mu(t)dhdkd\mu(s)$$
$$\int_{G}\int_{K^{-}}\int_{K}\int_{G}f(zthk\cdot
yh\cdot sh\cdot
x)d\mu(t)dhd\mu(s)dk=\int_{G}\int_{K^{-}}\int_{K}\int_{G}f(ztk\cdot
y sh\cdot x)d\mu(t)dhd\mu(s)dk.$$ Now, by using (\ref{9}) and
(\ref{10}), we obtain
$$\int_{G}\int_{K}\int_{K}\int_{G}f(zth\cdot(k\cdot
ysx))d\mu(t)dhdkd\mu(s)+\int_{G}\int_{K}\int_{K}\int_{G}f(zth\cdot(xsk\cdot
y))d\mu(t)dhd\mu(s)dk$$
$$=\int_{G}\int_{K^{+}}\int_{K}\int_{G}f(ztk\cdot
y sh\cdot
x)d\mu(t)dhdkd\mu(s)+\int_{G}\int_{K^{-}}\int_{K}\int_{G}f(ztk\cdot
y sh\cdot x))d\mu(t)dhd\mu(s)dk$$
$$+\int_{G}\int_{K^{-}}\int_{K}\int_{G}f(zth\cdot
x sk\cdot
y)d\mu(t)dhd\mu(s)dk+\int_{G}\int_{K^{+}}\int_{K}\int_{G}f(zth\cdot
x sk\cdot y)d\mu(t)dhdkd\mu(s)$$
$$=\int_{G}\int_{K}\int_{K}\int_{G}f(ztk\cdot
ysh\cdot
x)d\mu(t)dhdkd\mu(s)+\int_{G}\int_{K}\int_{K}\int_{G}f(zth\cdot
xsk\cdot y)d\mu(t)dhdkd\mu(s),$$ which gives equation (\ref{8}).
\end{proof}
The main result of this section
\begin{thm} Let $\delta>0$. Suppose that the continuous functions $f,g$:
 $G\longrightarrow \mathbb{C}$ satisfy the inequality
\begin{equation}\label{11} |\int_{K}\int_{ G} f(xtk\cdot y)d\mu(t)dk-f(x)g(y)|<\delta
\end{equation} for all $x, y \in G.$  Then, \\
i) $f,g$ are bounded or\\ii) $f$ is unbounded and $g$ satisfies the
functional equation
\begin{equation}\label{12}
    \int_{K}\int_{ G} g(xtk\cdot y)d\mu(t)dk+\int_{K}\int_{ G} g(k\cdot
    ytx)d\mu(t)dk=2g(x)g(y),\;x,y\in G\end{equation}
    or iii) $g$ is unbounded, $f$ satisfies (\ref{eq1}) (if $f\neq 0$, then $g$ satisfies equation
    (\ref{12})).
\end{thm}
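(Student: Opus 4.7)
The plan is to feed the inequality (\ref{11}) into the Kannappan-type identity of the lemma and then split into cases according to which of $f$ and $g$ is unbounded. The first step is to observe that each of the four quadruple integrals in (\ref{8}) contains an inner double integral of exactly the form $F(u,v):=\int_K\int_G f(utk\cdot v)\,d\mu(t)\,dk$ that appears on the left-hand side of (\ref{11}). After Fubini and the $K$-invariance of $\mu$, the first summand on the left of (\ref{8}) equals $\int F(z,k\cdot ysx)\,dk\,d\mu(s)$, the second equals $\int F(z,xsk\cdot y)\,d\mu(s)\,dk$, the first summand on the right equals $\int F(ztk\cdot y,x)\,d\mu(t)\,dk$, and the last equals $\int F(zth\cdot x,y)\,d\mu(t)\,dh$. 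Applying (\ref{11}) inside each of these four integrals introduces an error at most $M\delta$, where $M:=|\mu|(G)$, and identity (\ref{8}) then collapses to
\[
|f(z)(I_1(x,y)+I_2(x,y)) - g(x)F(z,y) - g(y)F(z,x)| \leq 4M\delta, \qquad (\star)
\]
where $I_1(x,y)+I_2(x,y)$ is, after Fubini and renaming dummy variables, exactly the left-hand side of (\ref{12}).

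Using (\ref{11}) once more to replace $F(z,y)$ by $f(z)g(y)$ and $F(z,x)$ by $f(z)g(x)$ converts $(\star)$ into
\[
|f(z)|\,|I_1(x,y)+I_2(x,y)-2g(x)g(y)| \leq (4M + |g(x)|+|g(y)|)\delta. \qquad (\star\star)
\]
If both $f$ and $g$ are bounded, we are in alternative (i). If $f$ is unbounded, fix $x,y$ and pick $z_n$ with $|f(z_n)|\to\infty$; since the right-hand side of $(\star\star)$ stays bounded in $z$, one concludes $I_1+I_2=2g(x)g(y)$, which is exactly (\ref{12}), settling alternative (ii).

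For alternative (iii), suppose $g$ is unbounded. If $f$ is bounded, then $F$ is bounded by $\|f\|_\infty M$ and (\ref{11}) forces $f(x)g(y)$ to be uniformly bounded; the unboundedness of $g$ then gives $f\equiv 0$, so (\ref{eq1}) holds trivially. If $f$ is also unbounded, alternative (ii) already supplies (\ref{12}), i.e., $I_1+I_2=2g(x)g(y)$ exactly. Substituting this into $(\star)$ and rearranging yields
\[
|g(x)(f(z)g(y)-F(z,y)) + g(y)(f(z)g(x)-F(z,x))| \leq 4M\delta.
\]
Bounding the second summand by $|g(y)|\delta$ via (\ref{11}) and letting $x=x_n$ with $|g(x_n)|\to\infty$ drives $|f(z)g(y)-F(z,y)|$ to zero, giving (\ref{eq1}). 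Finally, if (\ref{eq1}) holds and $f\not\equiv 0$, substituting $F=fg$ with no error into the lemma reduces $(\star)$ to an equality $f(z)(I_1+I_2)=2f(z)g(x)g(y)$, and choosing $z$ with $f(z)\neq 0$ recovers (\ref{12}) for $g$.

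The main obstacle is the first step---patiently identifying each of the four mixed quadruple integrals in (\ref{8}) as a single integral of $F$, since the pair of variables one must hold fixed is different in each summand. The subtle conceptual point is the doubly-unbounded subcase of (iii), where one cannot derive $F=fg$ directly from $(\star\star)$ but must first exploit the unboundedness of $f$ to secure (\ref{12}) and then feed it back into $(\star)$ in order to ``divide out'' $g(x)$ and obtain exact equality.
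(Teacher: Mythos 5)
Your argument is correct and follows essentially the same route as the paper: Lemma 2.1 combined with four applications of (\ref{11}) produces your $(\star\star)$, which is exactly the paper's bound $4\delta\|\mu\|+(|g(x)|+|g(y)|)\delta$, and dividing by the unbounded factor settles cases ii) and iii). Your handling of case iii) --- substituting (\ref{12}) back into $(\star)$ and letting $|g(x)|\to\infty$ --- is a minor streamlining of the paper's fresh six-term estimate in the variable $z$ (where the term $|f(x)|\cdot 0$ plays the same role), but the mechanism is identical.
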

\begin{proof}Assume that $f,g$ are continuous and satisfy inequality (\ref{11}). In the first case, we suppose that $f$ is unbounded.
Then from (\ref{11}) we get
$$|\int_{G}\int_{K}\int_{K}\int_{G}f(zth\cdot(xsk\cdot
y))d\mu(t)dhdkd\mu(s)-f(z)\int_{G}\int_{K}g(xsk\cdot
y))dkd\mu(s)|$$$$\leq\int_{G}\int_{K}|\int_{K}\int_{G}f(zth\cdot(xsk\cdot
y))d\mu(t)dh-f(z)g(xsk\cdot y)|d|\mu|(s)dk\leq\delta \|\mu\|$$
$$|\int_{G}\int_{K}\int_{K}\int_{G}f(zth\cdot(k\cdot
ysx))d\mu(t)dhdkd\mu(s)-f(z)\int_{G}\int_{K}g(k\cdot
ysx))dkd\mu(s)|\leq \delta \| \mu\|$$
$$|\int_{G}\int_{K}\int_{K}\int_{G}f(zth\cdot xsk\cdot
y)d\mu(t)dhdkd\mu(s)-\int_{G}\int_{K}f(zth\cdot
x))dhd\mu(t)g(y)|\leq
 \delta
\| \mu\|$$
$$|\int_{G}\int_{K}\int_{K}\int_{G}f(zth\cdot ysk\cdot
x)d\mu(t)dhdkd\mu(s)-\int_{G}\int_{K}f(zth\cdot y))dhd\mu(t)g(x)
|\leq\delta\| \mu\|.$$So, by using lemma 2.1, the triangle
inequality, we obtain
$$|f(z)||2g(x)g(y)-\int_{G}\int_{K}g(xtk\cdot
y)d\mu(t)dk-\int_{G}\int_{K}g(k\cdot ytx)d\mu(t)dk|$$
$$\leq |\int_{G}\int_{K}\int_{G}\int_{K}f(zsh\cdot(xtk\cdot y))d\mu(t)d\mu(s)dhdk
-f(z)\int_{G}\int_{K}g(xtk\cdot y)d\mu(t)dk|$$
$$+|\int_{G}\int_{K}\int_{G}\int_{K}f(zsh\cdot(k\cdot ytx))d\mu(t)d\mu(s)dhdk
-f(z)\int_{G}\int_{K}g(k\cdot ytx)d\mu(t)dk|$$
$$+|\int_{G}\int_{K}\int_{G}\int_{K}f(zsh\cdot xtk\cdot y)d\mu(t)d\mu(s)dhdk
-\int_{G}\int_{K}f(zsh\cdot x)d\mu(s)dh g(y)|$$
$$+|\int_{G}\int_{K}\int_{G}\int_{K}f(zsh\cdot ytk\cdot x)d\mu(t)d\mu(s)dhdk
-\int_{G}\int_{K}f(zsk\cdot y)d\mu(s)dk g(x)|$$
$$+|g(y)||\int_{G}\int_{K}f(zth\cdot x)d\mu(t)dh
-f(z) g(x)|+|g(x)||\int_{G}\int_{K}f(zth\cdot y)d\mu(t)dh -f(z)
g(y)|.$$Therefore, $$|f(z)||2g(x)g(y)-\int_{G}\int_{K}g(xtk\cdot
y)d\mu(t)dk-\int_{G}\int_{K}g(k\cdot ytx)d\mu(t)dk|$$
$$\leq \delta
\| \mu\|+\delta \| \mu\|+\delta \| \mu\|+\delta \|
\mu\|+|g(y)|\delta+|g(x)|\delta.
$$Since $f$ is assumed to be unbounded, then we get
$$2g(x)g(y)-\int_{G}\int_{K}g(xtk\cdot
y)d\mu(t)dk-\int_{G}\int_{K}g(k\cdot ytx)d\mu(t)dk=0$$ for all
$x,y\in G.$ This proves the  case ii). Now, assume that $g$ is
unbounded. It's easily verified that $f=0$ satisfies equation
(\ref{eq1}). For latter, we suppose that $f\neq 0$. From inequality
(\ref{11}) and the triangle inequality, we conclude that $f$ is also
unbounded, then from the case ii) the function $g$ satisfies
equation (\ref{12}). For all $x,y,z\in G$, we
have$$|g(z)||\int_{G}\int_{K}f(xtk\cdot y)d\mu(t)dk-f(x)g(y)|$$
$$\leq |\int_{G}\int_{K}\int_{G}\int_{K}f(xtk\cdot ysh\cdot z)d\mu(t)d\mu(s)dhdk
-\int_{G}\int_{K}f(xtk\cdot y)d\mu(t)dkg(z)|$$
$$+ |\int_{G}\int_{K}\int_{G}\int_{K}f(xth\cdot(ysk\cdot z))d\mu(t)d\mu(s)dhdk
-f(x)\int_{G}\int_{K}g(ysk\cdot z)d\mu(s)dk|$$
$$+ |\int_{G}\int_{K}\int_{G}\int_{K}f(xth\cdot(k\cdot zsy))d\mu(t)d\mu(s)dhdk
-f(x)\int_{G}\int_{K}g(k\cdot zsy)d\mu(s)dk|$$
$$+ |\int_{G}\int_{K}\int_{G}\int_{K}f(xtk\cdot zsh\cdot y)d\mu(t)d\mu(s)dhdk
-\int_{G}\int_{K}f(xtk\cdot z)d\mu(t)dkg(y)|$$
$$+|g(y)||\int_{G}\int_{K}f(xtk\cdot z))d\mu(t)dk
-f(x)g(z)|$$
$$+|f(x)||\int_{G}\int_{K}g(ysk\cdot z)d\mu(s)dk+\int_{G}\int_{K}g(k\cdot z sy)d\mu(s)dk
-2g(y)g(z)|$$
$$\leq 4\delta \| \mu\| +|g(y)|\delta+|f(x)|\times 0=4\delta \| \mu\| +|g(y)|\delta.$$Since $g$ is
unbounded, then $f$ satisfies equation (\ref{eq1}). This completes
the proof. \end{proof} By using the above result, we get the
following corollary.\begin{cor}(Superstability of equation
(\ref{eq2})) Let $\delta>0$. If a continuous function $f$:
$G\longrightarrow \mathbb{C}$ satisfies the inequality

\begin{equation}\label{20}
    |\int_{G}\int_{K}f(xtk\cdot y)d\mu(t)dk-
f(x)f(y)|\leq\delta, \; x,y\in G.
\end{equation} Then either
\begin{equation}\label{21}
    |f(x)|\leq\frac{\|\mu\|+\sqrt{\|\mu\|^{2}+4\delta}}{2},x\in
    G
\end{equation} or
\begin{equation}\label{22}
\int_{G}\int_{K}f(xtk\cdot y)d\mu(t)dk =f(x)f(y),x,y\in G.
\end{equation}
\end{cor}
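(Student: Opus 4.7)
The corollary is the special case $g=f$ of Theorem 2.2, packaged as a quantitative superstability statement. My plan is a simple dichotomy on whether $f$ is bounded, invoking the theorem in the unbounded case and using an elementary quadratic estimate in the bounded case.

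First I would set $g=f$ in Theorem 2.2 and dispose of the unbounded case. If $f$ is unbounded, then $g=f$ is unbounded and in particular $f\neq 0$, so conclusion (iii) of the theorem applies and yields that $f$ satisfies (\ref{eq1}); with $g=f$ this is exactly (\ref{22}). Note that I deliberately use (iii) rather than (ii), because (iii) directly produces the desired functional equation (\ref{eq2}), whereas (ii) would only give the symmetrised equation (\ref{12}).

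Next, suppose $f$ is bounded, and set $M=\sup_{x\in G}|f(x)|<\infty$. From (\ref{20}) and the triangle inequality,
$$|f(x)||f(y)|\leq \left|\int_{G}\int_{K} f(xtk\cdot y)\,d\mu(t)\,dk\right|+\delta\leq M\,\|\mu\|+\delta$$
for all $x,y\in G$. Specialising to $y=x$ and then passing to the supremum over $x$ yields the quadratic inequality $M^{2}\leq M\,\|\mu\|+\delta$, equivalently $M^{2}-M\,\|\mu\|-\delta\leq 0$. Solving for $M$ and discarding the negative root produces
$$M\leq \frac{\|\mu\|+\sqrt{\|\mu\|^{2}+4\delta}}{2},$$
which is precisely the bound (\ref{21}).

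There is no genuine obstacle here: the whole argument is essentially a two-line deduction from Theorem 2.2 plus the sup trick. The only mildly delicate point is making sure to extract (\ref{22}) from clause (iii) of the theorem (with $g=f$) rather than (ii), and checking that the ``$f=0$'' subcase of (iii) is already covered because $0$ trivially solves (\ref{22}).
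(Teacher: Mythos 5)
Your proposal is correct and matches the paper's (implicit) argument: the corollary is stated as an immediate consequence of Theorem 2.2 with $g=f$, where the unbounded case follows from clause (iii) and the explicit bound (\ref{21}) comes from the standard quadratic estimate $M^{2}\leq M\|\mu\|+\delta$ on $M=\sup|f|$. Your care in invoking (iii) rather than (ii), and your use of the normalized Haar measure on $K$ in the bound $\bigl|\int_{G}\int_{K}f(xtk\cdot y)\,d\mu(t)\,dk\bigr|\leq M\|\mu\|$, are both exactly what is needed.
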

\begin{cor}(Superstability of the calssical d'Alembert's functional
equation)  Let $\mu=\delta_e$. Let $\delta>0$. Let $\sigma$:
$G\longrightarrow G$ be an involution of $G$ (
$\sigma(xy)=\sigma(y)\sigma(x)$ and $\sigma(\sigma(x))=x$ for all
$x,y\in G)$. Let $K=\{I,\sigma\}$.  If a function $f$:
$G\longrightarrow \mathbb{C}$ satisfies the inequality

\begin{equation}\label{40}
    |f(xy)+f(x\sigma(y))-
2f(x)f(y)|\leq\delta, \; x,y\in G.
\end{equation} Then either
\begin{equation}\label{41}
    |f(x)|\leq\frac{1+\sqrt{1+2\delta}}{2},x\in
    G
\end{equation} or
\begin{equation}\label{42}
f(xy)+f(x\sigma(y))= 2f(x)f(y),x,y\in G.
\end{equation}
\end{cor}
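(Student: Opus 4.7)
The plan is to deduce Corollary 2.3 as a direct specialization of Corollary 2.2 to the data $\mu=\delta_{e}$, $K=\{I,\sigma\}$, with the only subtlety being careful bookkeeping of the normalization constant $1/2$ coming from the Haar measure on the two-element group $K$.

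First I would verify that the hypotheses of Corollary 2.2 are satisfied by this data. The Dirac measure $\delta_{e}$ is a complex bounded measure with compact support $\{e\}$ and total variation $\|\delta_{e}\|=1$. For any $k\in\mathrm{Mor}(G)$ we have $k\cdot e=e$, so $\int_{G}f(k\cdot t)\,d\delta_{e}(t)=f(e)=\int_{G}f(t)\,d\delta_{e}(t)$, giving $K$-invariance. The set $K=\{I,\sigma\}$ is a compact subgroup of $\mathrm{Mor}(G)$, with $I\in K^{+}$ and $\sigma\in K^{-}$, and its normalized Haar measure gives mass $1/2$ to each element.

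Next I would rewrite the left-hand side of $(2.5)$ in the form required by Corollary 2.2. Specializing,
\[
\int_{G}\int_{K}f(xtk\cdot y)\,d\mu(t)\,dk \;=\; \int_{K}f(xk\cdot y)\,dk \;=\; \tfrac{1}{2}\bigl[f(xy)+f(x\sigma(y))\bigr].
\]
Hence the hypothesis $|f(xy)+f(x\sigma(y))-2f(x)f(y)|\leq\delta$ is equivalent to
\[
\Bigl|\int_{G}\int_{K}f(xtk\cdot y)\,d\mu(t)\,dk - f(x)f(y)\Bigr|\leq \delta/2,
\]
which is precisely inequality $(2.3)$ of Corollary 2.2 with $\delta$ replaced by $\delta/2$.

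Finally I would apply Corollary 2.2 with this rescaled constant. One conclusion gives $|f(x)|\leq \tfrac{1+\sqrt{1+4(\delta/2)}}{2}=\tfrac{1+\sqrt{1+2\delta}}{2}$, which is $(2.6)$. The other conclusion gives $\tfrac{1}{2}[f(xy)+f(x\sigma(y))]=f(x)f(y)$, i.e.\ $f(xy)+f(x\sigma(y))=2f(x)f(y)$, which is $(2.7)$. There is no genuine obstacle; the only point to watch is that the factor $1/2$ from the Haar measure on $K=\{I,\sigma\}$ produces $2\delta$ (rather than $4\delta$) under the square root, accounting for the form of the bound in $(2.6)$. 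Continuity of $f$, required implicitly so that Corollary 2.2 applies, is inherited from the standing setup.
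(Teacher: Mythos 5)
Your proposal is correct and matches the paper's intent: the paper gives no separate proof of this corollary, presenting it as a direct specialization of Corollary 2.2 with $\mu=\delta_e$, $K=\{I,\sigma\}$, and the normalized Haar measure contributing the factor $1/2$ that turns $4\delta$ into $2\delta$ under the square root. Your bookkeeping of the rescaled constant $\delta/2$ reproduces exactly the bound $\frac{1+\sqrt{1+2\delta}}{2}$ stated in the corollary.
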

The following general corollary holds on any group and for
$K\subseteq Mor(G)$. It's a generalization of the result obtained by
Badora in \cite{baa2}.
\begin{cor} Let $\mu=\delta_e$. Let $\delta>0$. Suppose that the continuous function $f$:
 $G\longrightarrow \mathbb{C}$ satisfy the inequality
\begin{equation}\label{23} |\int_{K} f(xk\cdot y)dk-f(x)g(y)|<\delta, \;
x, y \in G.\end{equation}  Then, \\
i) $f,g$ are bounded or\\ii) $f$ is unbounded and $g$ satisfies the
functional equation
\begin{equation}\label{24}
    \int_{K} g(xk\cdot y)dk+\int_{K} g(k\cdot
    yx)dk=2g(x)g(y),\;x,y\in G\end{equation}
    or iii) $g$ is unbounded, $f$ satisfies (\ref{4}) (if $f\neq 0$, then $g$ satisfies equation
    (\ref{24})).
\end{cor}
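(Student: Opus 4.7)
The plan is to deduce this corollary as the special case $\mu = \delta_{e}$ of Theorem 2.2. First I would confirm that $\delta_{e}$, the Dirac mass at the identity, satisfies the running hypotheses on $\mu$: it is a complex bounded measure with compact support $\{e\}$, and it is $K$-invariant because every $k \in K$ fixes $e$, giving $\int_{G} h(k \cdot t)\, d\delta_{e}(t) = h(k \cdot e) = h(e) = \int_{G} h(t)\, d\delta_{e}(t)$ for every continuous $h$; in addition, $\|\delta_{e}\| = 1$.

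With this choice, for any continuous $h \colon G \to \mathbb{C}$ one has $\int_{G} h(x t y)\, d\delta_{e}(t) = h(xy)$. Applying this inside (\ref{eq1}) reduces it to $\int_{K} f(x k \cdot y)\, dk = f(x) g(y)$, which is exactly equation (\ref{4}); similarly (\ref{12}) collapses to $\int_{K} g(x k \cdot y)\, dk + \int_{K} g(k \cdot yx)\, dk = 2 g(x) g(y)$, which is (\ref{24}). The hypothesis (\ref{23}) is precisely (\ref{11}) in the case $\mu = \delta_{e}$.

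Therefore Theorem 2.2 applies directly and produces the claimed trichotomy: either $f$ and $g$ are both bounded, or $f$ is unbounded and $g$ satisfies (\ref{24}), or $g$ is unbounded, in which case $f$ satisfies (\ref{4}) and, provided $f \neq 0$, the function $g$ also satisfies (\ref{24}). These are exactly alternatives (i)–(iii) of the corollary.

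The only point requiring any real attention is the verification that $\delta_{e}$ qualifies as a $K$-invariant measure with compact support, and that follows immediately from $k \cdot e = e$. After that, the corollary is obtained by literal substitution into the conclusion of Theorem 2.2, so there is no serious obstacle to overcome. The novelty with respect to Badora's result \cite{baa2} therefore lies not in the argument but in the scope of the statement: no abelianness of $G$ is assumed, and $K$ is only required to be a compact subgroup of $\mathrm{Mor}(G)$ rather than of $\mathrm{Aut}(G)$, both generalizations being absorbed automatically by the framework of Theorem 2.2.
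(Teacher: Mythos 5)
Your proposal is correct and is exactly the route the paper intends: the corollary is stated as an immediate specialization of Theorem 2.2 to $\mu=\delta_e$ (the paper gives no separate proof), and your verification that $\delta_e$ is a $K$-invariant compactly supported measure via $k\cdot e=e$, together with the literal substitution reducing (\ref{eq1}) to (\ref{4}) and (\ref{12}) to (\ref{24}), is all that is needed.
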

\begin{cor}\cite{baa2} Let $\mu=\delta_e$, $K\subseteq Aut(G).$   Let $\delta>0$. Suppose that the continuous functions $f,g$:
 $G\longrightarrow \mathbb{C}$ satisfy the inequality
\begin{equation}\label{25} |\int_{K} f(xk\cdot y)dk-f(x)g(y)|<\delta, \;
x, y \in G.\end{equation}  Then, \\
i) $f,g$ are bounded or\\ii) $f$ is unbounded and $g$ satisfies the
functional equation
\begin{equation}\label{26}
    \int_{K} g(xk\cdot y)dk=g(x)g(y),\;x,y\in G\end{equation}
    or iii) $g$ is unbounded, $f$ satisfies (\ref{4}) (if $f\neq 0$, then $g$ satisfies equation
    (\ref{26})).
\end{cor}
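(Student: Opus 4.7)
My plan is to deduce Corollary~2.5 from Corollary~2.4 specialized to $\mu = \delta_e$, with one extra step: when $K \subseteq Aut(G)$, I will upgrade the functional equation (\ref{24}) obtained there for $g$ to the simpler equation (\ref{26}). Corollary~2.4 already delivers the three-way alternative directly; case (i) is the boundedness conclusion, and the statement that $f$ satisfies (\ref{4}) in case (iii) is precisely what we need. So the only substantive task is to strengthen ``$g$ satisfies (\ref{24})'' to ``$g$ satisfies (\ref{26})'' under the automorphism hypothesis.

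For this strengthening I would not try to manipulate (\ref{24}) algebraically; I would return to the stability inequality (\ref{25}) and run a short direct argument in the spirit of the proof of Theorem~2.2. Set
\[
A = \int_K \int_K f\bigl((xh\cdot y)\,k\cdot z\bigr)\,dh\,dk, \qquad B = \int_K \int_K f\bigl(xh\cdot (yk\cdot z)\bigr)\,dh\,dk.
\]
Since each $h \in K$ is an automorphism, $h\cdot(yk\cdot z) = (h\cdot y)(hk\cdot z)$; the substitution $k \mapsto h^{-1}k$, legal by the invariance of the Haar measure on the compact group $K$, then collapses $hk$ back to $k$ and shows $A = B$. Two applications of (\ref{25}), once averaging over $k$ with $h$ fixed and once averaging over $h$ with $k$ fixed, give
\[
|A - f(x)g(y)g(z)| \leq \delta + \delta|g(z)|, \qquad \Bigl|B - f(x)\int_K g(yk\cdot z)\,dk\Bigr|\leq \delta.
\]
Combining with $A = B$ yields $|f(x)|\cdot\bigl|g(y)g(z) - \int_K g(yk\cdot z)\,dk\bigr| \leq 2\delta + \delta|g(z)|$, and the assumed unboundedness of $f$ forces (\ref{26}) after relabeling $y,z$ as $x,y$.

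Case (iii) with $f \not\equiv 0$ reduces to case (ii): the inequality (\ref{25}) and the triangle inequality force $f$ to be unbounded whenever $g$ is unbounded and $f \not\equiv 0$, so the preceding argument again yields (\ref{26}) for $g$. The main (mild) obstacle is bookkeeping, namely verifying $A = B$ carefully via the automorphism identity and the Haar substitution and then combining the two error estimates cleanly. There is no deeper difficulty; the real content is the identity $h\cdot(uv) = (h\cdot u)(h\cdot v)$ which is available because $h \in Aut(G)$ and which is precisely what is missing in the general setting of Corollary~2.4, where the $K^{-}$ part forced the symmetrized equation (\ref{24}).
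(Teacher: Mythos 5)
Your proposal is correct and is essentially the specialization of the paper's own argument: your identity $A=B$ is exactly the $\mu=\delta_e$, $K=K^{+}$ case of one half of Lemma 2.1 (each half holds separately when no antiautomorphisms occur), and your two error estimates followed by the unboundedness of $f$ reproduce the computation in the proof of Theorem 2.2 with the symmetrization stripped out. You also correctly identified the one point the paper leaves implicit, namely that equation (\ref{26}) does not follow algebraically from (\ref{24}) on a non-abelian group and must instead be re-derived directly from the stability inequality (\ref{25}).
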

\begin{cor} Let $\delta>0$. Let $K=\{Id, \sigma\}$, where $\sigma$ is an involution of $G$.
$\mu$ is a complex measure with compact support and which is $\sigma$-invariant.
 Suppose that the continuous functions
$f,g$ :
 $G\longrightarrow \mathbb{C}$ satisfy the inequality
\begin{equation}\label{27} |\int_{ G} f(xty)d\mu(t)+\int_{ G} f(xt\sigma(y))d\mu(t)-2f(x)g(y)|<\delta, \;
x, y \in G.\end{equation}  Then, \\
i) $f,g$ are bounded or\\ii) $f$ is unbounded and $g$ satisfies the
functional equation
\begin{equation}\label{27}
    \int_{ G} g(xt y)d\mu(t)+\int_{ G} g(yt x)d\mu(t)+\int_{ G} g(\sigma(y)tx)d\mu(t)+\int_{ G} g(xt \sigma(y))d\mu(t)=4g(x)g(y)\end{equation}
    or iii) $g$ is unbounded, $f$ satisfies
\begin{equation}\label{40}
    \int_{ G} f(xty)d\mu(t)+\int_{ G} f(xt\sigma(y))d\mu(t)=2f(x)g(y),x,y\in G\end{equation}
    (if $f\neq 0$, then $g$ satisfies equation
    (\ref{27})).
\end{cor}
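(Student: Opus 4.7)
The plan is to deduce this corollary as a direct specialization of Theorem 2.2 to the two-element compact subgroup $K=\{Id,\sigma\}$ of $\mathrm{Mor}(G)$. First I would verify that the setup of Theorem 2.2 is satisfied: since $\sigma$ is a continuous antiautomorphism with $\sigma\circ\sigma=Id$, the set $K$ is a finite (hence compact) subgroup, with $Id\in K^{+}$ and $\sigma\in K^{-}$; its normalized Haar measure assigns mass $1/2$ to each element; and the hypothesized $\sigma$-invariance of $\mu$ is equivalent to $K$-invariance, since $Id$ imposes no condition. Consequently, for every continuous $F:G\to\mathbb{C}$,
$$\int_{K} F(k\cdot y)\,dk=\tfrac{1}{2}\bigl(F(y)+F(\sigma(y))\bigr),$$
so $\int_{K}\int_{G}f(xtk\cdot y)\,d\mu(t)\,dk$ equals $\tfrac{1}{2}\bigl[\int_{G}f(xty)\,d\mu(t)+\int_{G}f(xt\sigma(y))\,d\mu(t)\bigr]$. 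Dividing the stated hypothesis by $2$ reduces it to
$$\Bigl|\int_{K}\int_{G}f(xtk\cdot y)\,d\mu(t)\,dk-f(x)g(y)\Bigr|<\delta/2,$$
which is exactly inequality (\ref{11}) with constant $\delta/2$.

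Now I would invoke Theorem 2.2; its three alternatives translate into the three alternatives of the corollary. Case (i) (both $f$ and $g$ bounded) passes through verbatim. In case (ii), $f$ is unbounded and $g$ satisfies (\ref{12}); substituting $K=\{Id,\sigma\}$ and using the convention that $k\cdot$ acts only on the adjacent variable, the two integrals appearing in (\ref{12}) become
\begin{align*}
\int_{K}\int_{G}g(xtk\cdot y)\,d\mu(t)\,dk &=\tfrac{1}{2}\Bigl[\int_{G}g(xty)\,d\mu(t)+\int_{G}g(xt\sigma(y))\,d\mu(t)\Bigr],\\
\int_{K}\int_{G}g(k\cdot ytx)\,d\mu(t)\,dk &=\tfrac{1}{2}\Bigl[\int_{G}g(ytx)\,d\mu(t)+\int_{G}g(\sigma(y)tx)\,d\mu(t)\Bigr],
\end{align*}
so clearing the common factor $1/2$ in (\ref{12}) produces exactly the functional equation stated as (\ref{27}). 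In case (iii), $g$ is unbounded and $f$ satisfies (\ref{eq1}); the same expansion of (\ref{eq1}) multiplied by $2$ yields equation (\ref{40}), and the clause ``if $f\neq 0$ then $g$ satisfies (\ref{27})'' is inherited verbatim from the corresponding clause of Theorem 2.2.

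There is no substantial obstacle: the entire analytic content is already carried by Theorem 2.2, and the corollary is pure bookkeeping of the factor $1/2$ introduced by the normalized Haar measure on a two-element group. This rescaling accounts both for the $\delta/2$ appearing on the input side and for the factors $2$ and $4$ on the right-hand sides of (\ref{40}) and (\ref{27}). The only point worth stating explicitly is the identification of $\sigma$-invariance with $K$-invariance, which is immediate; everything else is substitution and multiplication by a constant.
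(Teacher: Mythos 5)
Your proposal is correct and matches the paper's intended route: the paper states this corollary without a separate proof, presenting it as a direct specialization of Theorem 2.2 to $K=\{Id,\sigma\}$, and your bookkeeping of the factor $1/2$ from the normalized Haar measure (giving $\delta/2$ on the input side and the factors $2$ and $4$ in the conclusions) is exactly what that specialization requires.
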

The following corollary is a generalization of the result obtained
by E. Elqorachi and M. Akkouchi in \cite{e} under the condition that
$f$ satisfies the  Kannappan type condition or  $\mu$ is a
generalized Gelfand measure.
\begin{cor} Let $\delta>0$. Let $K=\{Id, \sigma\}$, where $\sigma$ is an involution of $G$. $\mu$ is a complex measure with compact support and which is $\sigma$-invariant.
 Suppose that the continuous functions
$f,g$ :
 $G\longrightarrow \mathbb{C}$ satisfy the inequality
\begin{equation}\label{50} |\int_{ G} f(xty)d\mu(t)+\int_{ G} f(xt\sigma(y))d\mu(t)-2f(x)f(y)|<\delta, \;
x, y \in G.\end{equation}   Then either
\begin{equation}\label{51}
    |f(x)|\leq\frac{\|\mu\|+\sqrt{\|\mu\|^{2}+2\delta}}{2},x\in
    G
\end{equation} or
\begin{equation}\label{52}
    \int_{ G} f(xty)d\mu(t)+\int_{ G} f(xt\sigma(y))d\mu(t)=2f(x)f(y),x,y\in G\end{equation}

\end{cor}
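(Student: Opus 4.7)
The plan is to reduce the final corollary directly to the Superstability Corollary for equation (\ref{eq2}) by unpacking what averaging over $K=\{Id,\sigma\}$ means. Since $K$ is a finite group of order two, its normalized Haar measure assigns mass $1/2$ to each element, so for any continuous $h:G\to\mathbb{C}$
$$\int_K h(k\cdot y)\,dk=\tfrac{1}{2}\bigl(h(y)+h(\sigma(y))\bigr),\qquad y\in G.$$
Applying this with $h(\,\cdot\,)=\int_G f(xt\,\cdot\,)\,d\mu(t)$ gives
$$\int_K\int_G f(xtk\cdot y)\,d\mu(t)\,dk=\tfrac{1}{2}\!\left(\int_G f(xty)\,d\mu(t)+\int_G f(xt\sigma(y))\,d\mu(t)\right).$$

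First I would check that the hypotheses of the Superstability Corollary are satisfied in this setting. The involution $\sigma$ lies in $K^{-}$, and $K=\{Id,\sigma\}$ is a compact subgroup of $\mathrm{Mor}(G)$. The assumption that $\mu$ is $\sigma$-invariant is exactly what it means for $\mu$ to be $K$-invariant in the sense of the paper, since invariance under $Id$ is trivial. Thus the framework of Section~2 applies verbatim.

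Next, I would divide the hypothesis (\ref{50}) by $2$ and rewrite it using the identity above, obtaining
$$\left|\int_G\int_K f(xtk\cdot y)\,d\mu(t)\,dk-f(x)f(y)\right|<\frac{\delta}{2},\qquad x,y\in G.$$
This is precisely inequality (\ref{20}) of the Superstability Corollary with $\delta$ replaced by $\delta/2$. Its conclusion is that either
$$|f(x)|\leq\frac{\|\mu\|+\sqrt{\|\mu\|^{2}+4(\delta/2)}}{2}=\frac{\|\mu\|+\sqrt{\|\mu\|^{2}+2\delta}}{2},$$
which is exactly (\ref{51}), or $\int_G\int_K f(xtk\cdot y)\,d\mu(t)\,dk=f(x)f(y)$. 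Multiplying the latter identity by $2$ and substituting the explicit expression for the $K$-average yields (\ref{52}). This finishes the argument.

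There is no genuine obstacle here beyond bookkeeping: the corollary is a direct specialization of the superstability theorem for (\ref{eq2}) to the two-element group $K=\{Id,\sigma\}$. The only point requiring a moment of care is the normalization constant $1/2$, which explains why the $4\delta$ appearing inside the square root of the general bound becomes $2\delta$ in (\ref{51}); keeping track of this scaling is the sole nontrivial step of the verification.
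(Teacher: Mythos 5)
Your proposal is correct and follows essentially the same route as the paper, which states this corollary as a direct specialization of the superstability corollary for equation (\ref{eq2}) to $K=\{Id,\sigma\}$ without writing out the details. Your bookkeeping with the normalized Haar measure on the two-element group --- which turns the hypothesis into inequality (\ref{20}) with $\delta/2$ and hence the $4\delta$ of (\ref{21}) into the $2\delta$ of (\ref{51}) --- is exactly the intended reduction.
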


\textsc{ Bouikhalene Belaid, }  \textsc{Polydisciplinary Faculty,
University Sultan Moulay Slimane, Beni-Mellal, Morocco},\\ \textsc{E-mail} : bbouikhalene@yahoo.fr\\
\\\textsc{ Elqorachi Elhoucien, Department of Mathematics,
Faculty of Sciences, University  Ibn Zohr, Agadir, Morocco, \\
E-mail}: elqorachi@hotamail.com\\

\end{document}